\documentclass[11pt,english]{amsart}

\usepackage[T1]{fontenc}
\usepackage{amssymb,dsfont,fourier,baskervald}
\usepackage[text={6in,8.5in},centering]{geometry}

\usepackage[dvipsnames]{xcolor}   
\usepackage{xparse}
\usepackage{xr-hyper}
\definecolor{brightmaroon}{rgb}{0.76, 0.13, 0.28}
\usepackage[linktocpage=true,colorlinks=true,hyperindex,citecolor=BrickRed,linkcolor=blue]{hyperref}

\newtheorem{theorem}{Theorem}[section]

\newtheorem{lemma}[theorem]{Lemma}

\numberwithin{equation}{section}

\begin{document}
	
\author{Amartya Goswami}
\address{[1] Department of Mathematics and Applied Mathematics, University of Johannesburg, South Africa;
[2] National Institute for Theoretical and Computational Sciences (NITheCS), South Africa.}
\email{agoswami@uj.ac.za}
	
\title{Spectral hyperspaces of Krasner hyperrings}
	
\date{}
	
\subjclass{16Y99}
	
\keywords{Krasner hyperring; spectral space}
	
\maketitle
	
\begin{abstract}
The purpose of this note is to prove that the hyperspaces of proper hyperideals of Krasner hyperrings are spectral.
\end{abstract}
	
\section{Introduction and Preliminaries}
 
The initial exploration of multi-valued algebraic structures dates back to \cite{M34}, where the pioneering work introduced hypergroups as an extension of groups, accommodating multi-valued binary operations. Subsequently, in \cite{K47}, the notion of hyperrings was established. Since their inception, hyperrings have undergone extensive investigation across both algebraic and geometric domains. A comprehensive examination of various algebraic attributes of hyperrings, including their generalizations, is provided in \cite{DavLeo} (also see \cite{DS06}). To delve into the geometric applications of hyperrings, refer to \cite{CC10, CC11,CL-F03,J18}. Moreover, in \cite{KT22}, a study delves into the topological aspects of multiplication Krasner hypermodules.

To best of author's knowledge, spectral spaces over Krasner hyperrings have never been studied. This note aims  to provide an example of a spectral space in the context of hyperrings.  In order to do so, we have adapted an alternative technique (see Lemma \ref{cso}) that avoids the need to verify existence  of a quasi-compact open basis that is closed under finite intersections. To make the paper self-contained, we have supplied all the essential definitions and proofs.
 
Suppose that $R$ is a nonempty set and $\mathcal{P}^*(R)$ is the set of all nonempty subsets of $R$. 
A  \emph{commutative Krasner hyperring} is a system $(R,+,\cdot,-,0)$ such that

(I) $(R,+,0)$ is a \emph{canonical hypergroup}, that is, $+\colon R\times R\to \mathcal{P}^*(R)$ is a hyperoperation on $A$ satisfying the following properties for all $a,b,c \in R$:
\begin{enumerate}
	
\item $a+b=b+a;$
	
\item $a+(b+c)=(a+b)+c$;
	
\item  there exists $0\in R$ such that $a+0=\lbrace a\rbrace$;
	
\item  for every $a$, there exists a unique $-a\in R$ such that $0\in a-a;$
	
\item if $a\in b+c$, then $c\in -b+a$ and $a \in c-b$,
\end{enumerate}

(II) $(R, \cdot)$ is a commutative semigroup,

(III) $a \cdot 0 = 0,$ and

(IV) $a\cdot (b+c)=  a\cdot b + a \cdot c,$
for all $a,b,c \in R.$
A hyperring $R$ is called \emph{unital} if $R$ has a multiplicative identity, that is, there exists $1\in R$ such that $a\cdot 1=a$ for all $a\in R$. We write  $ab$ for $a\cdot b$.
Whenever we say Krasner hyperring, we  mean a commutative unital Krasner hyperring. 
A subhypergroup $I$ of a hyperring $R$ is called a \emph{hyperideal} of $R$ if $r a \in I$ for all $r\in R, a \in I.$  By $\mathcal{I}_R$, we denote the set of all hyperideals of $R$. 
The following result is well-known.

\begin{lemma}\label{isi}
Let $R$ be a hyperring. If $\{I_{\lambda}\}_{\lambda \in \Lambda}$ is a nonempty family of hyperideals of  $R,$ then  \[\sum_{\lambda \in \Lambda}I_{\lambda} = \left\{x \mid x \in \sum_{\lambda \in \Lambda}a_{\lambda}, a_{\lambda} \in I_{\lambda}\right\}\]
is also a hyperideal of $R$, where each term of the sum is taken over a finite subset of the index set $\Lambda$.
\end{lemma}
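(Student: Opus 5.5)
To prove Lemma \ref{isi}, I would check directly that the set $S=\sum_{\lambda}I_\lambda$ satisfies the definition of a hyperideal. That means showing three things: $S$ is nonempty and contains $0$; $S$ is a subhypergroup of $(R,+,0)$ (closed under the hyperoperation and under negatives); and $S$ absorbs multiplication by elements of $R$. Throughout I will describe an element $x\in S$ by a finite set $F\subseteq\Lambda$ and elements $a_\lambda\in I_\lambda$ ($\lambda\in F$) with $x\in\sum_{\lambda\in F}a_\lambda$. This finite hypersum is well defined as a subset of $R$ because $+$ is associative and commutative on $\mathcal{P}^*(R)$, extended elementwise.

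\emph{Zero.} The family is nonempty, so pick $\lambda_0$. Since $0\in I_{\lambda_0}$ and $0+0=\{0\}$, the one-term sum with $a_{\lambda_0}=0$ gives $0\in S$.

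\emph{Closure under $+$.} Take $x\in\sum_{F}a_\lambda$ and $y\in\sum_{G}b_\mu$. I enlarge both index sets to $H=F\cup G$, padding the missing terms with $0$; this is harmless because $a+0=\{a\}$. Commutativity and associativity then give
\[x+y\subseteq \sum_{\lambda\in H}a_\lambda+\sum_{\lambda\in H}b_\lambda=\sum_{\lambda\in H}(a_\lambda+b_\lambda).\]
Any $z$ in the right-hand side lies in some $\sum_{\lambda\in H}c_\lambda$ with $c_\lambda\in a_\lambda+b_\lambda\subseteq I_\lambda$, because each $I_\lambda$ is a subhypergroup. Hence $x+y\subseteq S$.

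\emph{Negatives.} First I need the small lemma $-(a+b)=(-a)+(-b)$, which follows from axiom (5) together with uniqueness of negatives in a canonical hypergroup. Extending it to finite sums gives $-x\in\sum_F(-a_\lambda)$, and since $-a_\lambda\in I_\lambda$, this puts $-x$ in $S$.

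\emph{Absorption.} For $r\in R$, axiom (IV), extended by induction to finite sums, gives
\[rx\in r\sum_F a_\lambda=\sum_F ra_\lambda,\]
and $ra_\lambda\in I_\lambda$. So $rx\in S$.

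The main obstacle is the bookkeeping with set-valued sums: making rigorous that finite hypersums of sets are well defined, that they reassociate and commute, and that one may choose representatives termwise. I would handle this with a preliminary induction on $|F|$, proving that every $z\in\sum_F A_\lambda$ (for subsets $A_\lambda\subseteq R$) lies in $\sum_F c_\lambda$ for some $c_\lambda\in A_\lambda$. With that in hand, the identity $-(a+b)=(-a)+(-b)$ is the only genuinely hypergroup-theoretic step remaining.
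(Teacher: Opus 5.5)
Your argument is correct and complete. The paper gives no proof of this lemma (it simply calls it well known), so there is no route to compare against. Your direct verification is exactly the routine check the paper leaves to the reader: $0\in S$, closure under the hyperoperation by padding with zeros and regrouping, closure under negatives, and absorption via (IV) extended to finite sums. Your preliminary induction (every $z\in\sum_F A_\lambda$ lies in some $\sum_F c_\lambda$ with $c_\lambda\in A_\lambda$) is the right way to make the set-valued bookkeeping rigorous.

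One small caution when you invoke axiom (5). As printed in the paper, its second clause (``$a\in c-b$'') is a misprint; the standard reversibility axiom has $b\in a-c$. So derive the negation identity from the first clause and commutativity only:
\begin{itemize}
\item from $c\in a+b=b+a$ you get $a\in -b+c=c+(-b)$;
\item then $-b\in -c+a=a+(-c)$;
\item then $-c\in -a+(-b)$.
\end{itemize}
This inclusion $-(a+b)\subseteq(-a)+(-b)$ is all your negatives step actually needs. The reverse inclusion, if you want equality, follows by applying the same argument to $-a,-b$ and using $-(-a)=a$, which comes from uniqueness of negatives.
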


	

A hyperideal $I$ of $R$ is called \emph{proper} if $I\neq R$. By $\mathcal{I}^+_R$, we  denote the set of prepr hyperideals of $R$. A proper hyperideal $M$ of a hyperring $R$ is called \emph{maximal} if the only hyperideals of $R$ that contain $M$ are $M$ itself and $R.$ The result below is similar to that for rings.

\begin{lemma}\label{max}
Every proper  hyperideal $I$  of a hyperring $R$ is contained in a  maximal hyperideal of $R$.
\end{lemma}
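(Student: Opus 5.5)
The plan is to run the standard Zorn's lemma argument, the same one used for rings. Let $I$ be a proper hyperideal of $R$ and consider
\[
\Sigma=\{J\in\mathcal{I}_R \mid I\subseteq J,\ J\neq R\},
\]
partially ordered by inclusion. This set is nonempty because $I\in\Sigma$. I will show that every chain in $\Sigma$ has an upper bound in $\Sigma$. Zorn's lemma then gives a maximal element $M$ of $\Sigma$, and it remains to check that $M$ is a maximal hyperideal in the sense defined above. That last check is immediate: if $M\subseteq J$ for a hyperideal $J\neq R$, then $J\in\Sigma$, so $J=M$ by maximality in $\Sigma$.

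For a nonempty chain $\{J_\lambda\}_{\lambda\in\Lambda}$ in $\Sigma$, the candidate upper bound is the union $U=\bigcup_\lambda J_\lambda$. Because the family is a chain, $U$ also equals the sum $\sum_\lambda J_\lambda$ of Lemma \ref{isi}, but I will verify the hyperideal axioms for $U$ directly.

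\begin{itemize}
\item \emph{Contains $0$:} $0\in U$, since each $J_\lambda$ contains $0$.
\item \emph{Closed under $+$:} given $a,b\in U$, choose $\lambda,\mu$ with $a\in J_\lambda$ and $b\in J_\mu$. Since the $J$'s form a chain, we may assume $J_\lambda\subseteq J_\mu$. Then $a+b\subseteq J_\mu\subseteq U$, because $J_\mu$ is a subhypergroup.
\item \emph{Closed under negatives:} $-a\in J_\lambda\subseteq U$.
\item \emph{Absorbs multiplication:} $ra\in J_\lambda\subseteq U$ for every $r\in R$.
\end{itemize}

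Clearly $I\subseteq U$. Whatever precise definition of subhypergroup is intended (closure under the hyperoperation and negatives, containing $0$), each axiom involves only finitely many elements. Finitely many elements of a chain lie in a single member $J_\mu$, so each axiom for $U$ follows from the same axiom for that $J_\mu$.

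The only point needing care is properness of $U$, and this is where the unit is used. A hyperideal $J$ equals $R$ if and only if $1\in J$. One direction is trivial. Conversely, if $1\in J$, then for every $r\in R$ we have $r=r\cdot 1\in J$ by the absorption property. Since each $J_\lambda$ is proper, $1\notin J_\lambda$ for all $\lambda$, hence $1\notin U$, and so $U\neq R$. Therefore $U\in\Sigma$ is an upper bound for the chain, which completes the Zorn argument.

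I expect no real obstacle here. The mild subtlety is that one must use the union rather than an arbitrary sum: the union is what makes the test "$1\notin U$" transparent, whereas for a general sum one would have to argue separately that $1$ cannot appear in a finite hypersum of elements. Because the family is a chain, the union and the sum coincide anyway.
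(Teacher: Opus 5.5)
Your proof is correct, and it is the standard Zorn's lemma argument for rings that the paper alludes to; the paper gives no proof of Lemma~\ref{max}, only the remark that it is similar to the ring case. Your two key checks are exactly what that adaptation needs: the union of a chain of hyperideals is again a hyperideal because each axiom involves only finitely many elements, and a hyperideal is proper precisely when it does not contain $1$.
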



Let us now recall a few relevant terminologies from topology. Let $L$ be a complete lattice and $x\in L$. A \emph{cover of} $x$ is a family $\{y_{\lambda}\}_{\lambda\in \Lambda}$ of elements of $L$ such that $x\leqslant \bigvee_{\lambda\in \Lambda}y_{\lambda}$. An element $x$ of $L$ is called \emph{compact} if every cover of $x$ has a finite subcover. A lattice is \emph{algebraic} if it is complete and every element is a least upper
bound of compact elements. It is easy to see that the complete lattice $\mathcal{I}_R$ is algebraic, for every hyperideal is a sum of finitely generated (and hence compact) hyperideals of $R$. This property of $\mathcal{I}_R$ is going to play a crucial role in our proof of the main result. 

A space is called \emph{quasi-compact} if every open cover of it has a finite subcover, or equivalently, the space satisfies the finite intersection property. In this definition of quasi-compactness, we do not assume the space is $T_2.$ A closed subset $S$ of a space $X$ is called \emph{irreducible} if $S$ is not the union of two properly smaller closed
subsets of $X$. A  space $X$ is called \emph{sober} if every non-empty irreducible closed subset $\mathcal K$ of $X$ is of the form: $\mathcal K=\mathcal{C}_{\{x\}}$, the closure of an unique singleton set $\{x\}$.

According to \cite{H69}, a \emph{spectral space} is a topological space that is  quasi-compact, sober,  admitting a basis of quasi-compact open subsets that is closed under finite intersections. We are going to use of the following topological lemma in the proof of our main result.

\begin{lemma}\label{cso}
A quasi-compact, sober, open subspace of a spectral space is spectral. 
\end{lemma}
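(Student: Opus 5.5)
Let $X$ be a spectral space and $U\subseteq X$ an open subspace which is quasi-compact and sober in the subspace topology. By Hochster's definition we must show that $U$ is quasi-compact and sober, and that it has a basis of quasi-compact open sets closed under finite intersections. The first two properties are hypotheses, so only the basis needs work. The idea is to take the basis of $X$ and cut it down to those members lying inside $U$.

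\textbf{Step 1: the candidate basis.} Let $\mathcal{B}$ be a basis of quasi-compact open subsets of $X$ closed under finite intersections. Put
\[
\mathcal{B}_U=\{B\in\mathcal{B}\mid B\subseteq U\}.
\]
Each $B\in\mathcal{B}_U$ is open in $U$, since it is open in $X$ and contained in $U$. It is also quasi-compact as a subspace of $U$, because quasi-compactness of a subset does not depend on the ambient space: an open cover of $B$ in the topology of $U$ is exactly an open cover of $B$ by traces of open sets of $X$.

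\textbf{Step 2: $\mathcal{B}_U$ is a basis of $U$.} Let $W$ be open in $U$ and $x\in W$. Since $U$ is open in $X$, $W$ is open in $X$. Hence there is some $B\in\mathcal{B}$ with $x\in B\subseteq W\subseteq U$, so $B\in\mathcal{B}_U$. Therefore every open subset of $U$ is a union of members of $\mathcal{B}_U$. This is the one place where openness of $U$ is used, and it is essential: for a non-open subspace, traces $B\cap U$ need not be quasi-compact.

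\textbf{Step 3: closure under finite intersections.} If $B_1,B_2\in\mathcal{B}_U$, then $B_1\cap B_2\in\mathcal{B}$ and $B_1\cap B_2\subseteq U$, so $B_1\cap B_2\in\mathcal{B}_U$. For the empty intersection we need $U$ itself. Since $U$ is open it is a union of members of $\mathcal{B}_U$. Since $U$ is quasi-compact, finitely many of them suffice, so $U$ is a finite union of quasi-compact opens of $U$. If one insists the basis contain the top element, simply adjoin $U$: the family $\mathcal{B}_U\cup\{U\}$ is still closed under finite intersections, because $B\cap U=B$, and it still consists of quasi-compact opens.

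\textbf{Conclusion and main point.} Together with the hypotheses that $U$ is quasi-compact and sober, Steps 1 to 3 show that $U$ is spectral. There is no serious obstacle. The only points needing care are that the basis restricts correctly because $U$ is open (Step 2), and the convention for the empty intersection, which is handled by the quasi-compactness of $U$ (Step 3). Sobriety is not automatic for arbitrary open subspaces in the paper's formulation, which is why it is assumed; in this argument it is simply carried over.
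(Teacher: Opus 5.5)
Your proof is correct and is essentially the paper's argument: keep only those quasi-compact opens of $X$ that lie inside the open subspace, use openness to see that they still form a basis, and inherit closure under finite intersections. Your use of a fixed basis $\mathcal{B}$ instead of the paper's family $\mathcal{O}_X$ of all quasi-compact opens is, if anything, slightly cleaner, since it avoids needing that all quasi-compact opens of $X$ are closed under intersection.

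One closing aside is wrong, though it does not affect the proof: an open subspace of a sober space is always sober. For an irreducible closed $F\subseteq U$, the generic point in $X$ of its closure must lie in $U$, since otherwise that closure would miss $U$. So the sobriety hypothesis is in fact redundant, and only quasi-compactness is a genuine restriction.
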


\begin{proof} 
Suppose that $S$ is a quasi-compact, sober, open subspace of a spectral space $X$. Since $S$ is quasi-compact and sober, it is sufficient to prove that the set $\mathcal{O}_{S}$ of compact open subsets of $S$ forms a basis of a topology that is closed under finite intersections. It is obvious that a subset $T$ of $S$ is open in $S$ if and only if $T$ is open in $X$, and hence a subset $T$ of $S$ belongs to $\mathcal{O}_{S}$ if and only if $T$ belongs to $\mathcal{O}_{X}.$ Now 
using these facts, we argue as follows.
	
Let $U$ be an open subset of $S$. Since $U$ is also open in $X$, we have $U=\cup\, \mathcal{U},$ for some subset $\mathcal{U}$ of $\mathcal{O}_{X}.$ But each element of $\mathcal{U}$ being a subset of $U$ is a subset of $S$, and it belongs to $\mathcal{O}_{S}.$ Therefore, every open subset of $S$ can be presented as a union of compact open subsets of $S$. Now it remains to prove that $\mathcal{O}_{S}$ is closed under finite intersections, but this immediately follows from the fact that $\mathcal{O}_{X}$ is closed under finite intersections. 
\end{proof} 
 
Let $R$ be a Krasner hyperring. The  lower topology on  $\mathcal{I}^+_R$ is the topology for which the sets of the type: 
\[
\mathcal{V}_J=
\left\{I\in \mathcal{I}^+_R\mid J\subseteq I \right\}, \qquad (J\in \mathcal{I}_R);	
\]
form a subbasis of closed sets, 
and by $\mathcal{I}^+_R$, we  also denote the topological space, called a \emph{hyperspace}.

\begin{theorem}\label{mth}
Let $R$ be a Krasner hyperring. Then the hyperspace $\mathcal{I}^+_R$  is spectral.
\end{theorem}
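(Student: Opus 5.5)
Following the strategy announced in the introduction, we embed $\mathcal{I}^+_R$ as a quasi-compact, sober, open subspace of a spectral space and then apply Lemma \ref{cso}. The ambient space will be the whole lattice $\mathcal{I}_R$ with its lower topology, i.e.\ the topology having as subbasic closed sets
\[
\mathcal{V}'_J=\left\{I\in \mathcal{I}_R\mid J\subseteq I\right\}, \qquad (J\in \mathcal{I}_R).
\]
The subspace topology induced on $\mathcal{I}^+_R$ is then exactly the lower topology defined before Theorem \ref{mth}, because $\mathcal{V}'_J\cap \mathcal{I}^+_R=\mathcal{V}_J$.

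\textbf{Step 1: $\mathcal{I}_R$ is spectral.} We use the fact, noted earlier in the paper, that $\mathcal{I}_R$ is an algebraic lattice; its compact elements are the finitely generated hyperideals. We then invoke the standard result that an algebraic lattice endowed with its lower topology is a spectral space. If this must be shown directly, the argument goes as follows. The sets $\mathcal{U}_K=\mathcal{I}_R\setminus \mathcal{V}'_K$, with $K$ finitely generated, form a basis. These sets are quasi-compact open, and since $\mathcal{U}_K\cap\mathcal{U}_{K'}=\mathcal{U}_{K+K'}$, the basis is closed under finite intersections. Sobriety is checked by sending an irreducible closed set to its supremum. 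Quasi-compactness of $\mathcal{I}_R$ is immediate, since every nonempty closed set contains the top element $R$.

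\textbf{Step 2: $\mathcal{I}^+_R$ is open.} Its complement is $\{R\}=\mathcal{V}'_R$, which is closed.

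\textbf{Step 3: $\mathcal{I}^+_R$ is quasi-compact.} By Lemma \ref{max}, every member of $\mathcal{I}^+_R$ lies in a maximal hyperideal. Alternatively, we observe that $\mathcal{I}^+_R=\mathcal{U}_{R}$ and $R=\langle 1\rangle$ is compact, so $\mathcal{I}^+_R$ is one of the quasi-compact basic open sets. Via Alexander's subbasis lemma, it suffices to treat subbasic closed sets $\mathcal{V}_{J_\lambda}$ with the finite intersection property. If $\sum_\lambda J_\lambda=R$, then $1$ already lies in a finite subsum by Lemma \ref{isi}, and this contradicts the finite intersection property. Otherwise $\sum_\lambda J_\lambda$ is proper and lies in every $\mathcal{V}_{J_\lambda}$.

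\textbf{Step 4: $\mathcal{I}^+_R$ is sober.} Take a nonempty irreducible closed set $\mathcal{K}\subseteq\mathcal{I}^+_R$ and let $I_0=\sum_{I\in\mathcal{K}}I$. We show that $I_0\in\mathcal{K}$; then $\mathcal{K}$ is the closure of $\{I_0\}$. Uniqueness follows because the lower topology is $T_0$: $I\neq I'$ implies $\mathcal{V}_I\neq\mathcal{V}_{I'}$ on one of the two points.

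Step 4 is the main obstacle. First we must check that $I_0$ is proper. If $1\in I_0$, then $1$ lies in a finite sum $I_1+\dots+I_n$ with $I_i\in\mathcal{K}$. The closed sets $\mathcal{K}\cap\mathcal{V}_{I_i}$ would then be a finite family whose intersection is empty. To conclude, one uses irreducibility together with the algebraicity of the lattice. The intended argument is the standard one for algebraic lattices: suppose $I_0\notin\mathcal{K}$. Then there are a compact $K\subseteq I_0$ and basic closed sets witnessing this. Writing $K\subseteq I_1+\dots+I_n$, irreducibility lets one cover $\mathcal{K}$ by finitely many proper closed subsets, which is a contradiction.

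Alternatively, the whole of Step 4 can be bypassed. Since $\mathcal{I}^+_R$ is open in the spectral space $\mathcal{I}_R$ and open subspaces of sober spaces are sober, sobriety of $\mathcal{I}^+_R$ follows from Step 1. This is the route I would take.

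Combining Steps 1–4 with Lemma \ref{cso} proves the theorem.
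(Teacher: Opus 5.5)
Your committed argument is correct. Apart from sobriety, it is the paper's proof. The paper also takes $X=\mathcal{I}_R$ and cites Priestley for spectrality of the lower topology on the algebraic lattice $\mathcal{I}_R$. It gets openness from $\{R\}=\mathcal{V}_R$ being closed, and proves quasi-compactness with Alexander's subbasis lemma by noting that $1$ lies in a finite subsum.

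The real difference is Step 4. The paper argues directly: it shows that the closure of a point is $\mathcal{C}_I=\mathcal{V}_I$ and that the space is $T_0$. You instead note that an open subspace of a sober space is sober, with sobriety of $\mathcal{I}_R$ coming from the cited theorem. This costs nothing. It is also more complete: computing point closures does not show that an arbitrary irreducible closed set is one of them, and the paper never supplies that step. Once sobriety is inherited this way, Lemma \ref{cso} is just the standard fact that a quasi-compact open subspace of a spectral space is spectral.

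The optional sketches you include contain real errors and should be removed or corrected.

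\textbf{The basis in Step 1.} From $\mathcal{V}'_{K+K'}=\mathcal{V}'_K\cap\mathcal{V}'_{K'}$ you get $\mathcal{U}_{K+K'}=\mathcal{U}_K\cup\mathcal{U}_{K'}$, not $\mathcal{U}_K\cap\mathcal{U}_{K'}$. So the single sets $\mathcal{U}_K$ form only a subbasis. The correct basis is the finite intersections $\mathcal{U}_{K_1}\cap\cdots\cap\mathcal{U}_{K_n}$ with each $K_i$ finitely generated. Each of these is quasi-compact by your Step 3 argument. Suppose the traces of the sets $\mathcal{V}'_{J_\lambda}$ have the finite intersection property. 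Then no $K_i$ lies in a finite subsum of the $J_\lambda$. By compactness no $K_i$ lies in $\sum_\lambda J_\lambda$, so this sum is a common point.

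\textbf{The generic point.} Closed sets of the lower topology are up-sets. Hence the supremum of any nonempty closed subset of $\mathcal{I}_R$ is $R$, and the generic point must be the infimum instead. Your candidate $I_0=\sum_{I\in\mathcal{K}}I$ already fails for $R=\mathbb{Z}$ and $\mathcal{K}=\mathcal{I}^+_{\mathbb{Z}}$. This $\mathcal{K}$ is the closure of the zero ideal, hence irreducible, yet $I_0\supseteq 2\mathbb{Z}+3\mathbb{Z}=\mathbb{Z}$. So your properness check cannot succeed.

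The right candidate is $I_0=\bigcap_{I\in\mathcal{K}}I$, which is proper because it is contained in a member of $\mathcal{K}$. No algebraicity is needed:
\begin{itemize}
\item Write $\mathcal{K}=\bigcap_\lambda\bigcup_{i=1}^{n_\lambda}\mathcal{V}_{I_{\lambda i}}$.
\item Irreducibility gives, for each $\lambda$, some $i$ with $\mathcal{K}\subseteq\mathcal{V}_{I_{\lambda i}}$. Hence $I_{\lambda i}\subseteq I_0$.
\item Therefore $I_0\in\mathcal{K}$, and $\mathcal{K}=\mathcal{V}_{I_0}=\mathcal{C}_{I_0}$.
\end{itemize}
This short argument is exactly what is missing from the paper's step (3).
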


\begin{proof} 
Taking $X=\mathcal{I}_R$ and $S=\mathcal{I}^+_R$ in Lemma \ref{cso}, it is sufficient to  prove the following:

\begin{enumerate}
	
\item \label{ngs}
$\mathcal{I}_R$ is a spectral hyperspace;
	
\item \label{oco} $\mathcal{I}^+_R$ is quasi-compact;
	
\item \label{tso} $\mathcal{I}^+_R$ is sober.
	
\item \label{top}  $\mathcal{I}^+_R$ is an open subhyperspace of the hyperspace $\mathcal{I}_R$.
\end{enumerate}
We impose a lower topology on $\mathcal{I}_R$ by considering the family $\left\{I\in \mathcal{I}_R\mid J\subseteq I \right\}_{J\in \mathcal{I}_R}$ as a closed subbasis.  To show (1), observe that  $\mathcal{I}_R$ is an algebraic lattice, and hence, the  spectrality of the hyperspace $\mathcal{I}_R$ follows from \cite[Theorem 4.2]{P94}.

Although the argument in the proof of (2) is  similar to the proof of quasi-compactness of $\mathrm{Spec}(R)$ (where $R$ is an unital commutative ring) endowed with Zariski topology, however, we need to consider the following two facts:
\begin{enumerate}
\item[$\bullet$] In order to show quasi-compactness, we must be assured of the fact that if   $\mathcal{V}_I=\emptyset$, then $I=R.$ This fact is equivalent to the fact that the set of maximal hyperideals of $R$ is contained in $\mathcal{I}^+_R$. Now, Lemma \ref{max} guarantees that this set of maximal hyperideals is nonempty.

\item[$\bullet$] Note that our argument must be with subbasic closed sets (because of the nature of the topology), and hence, we need to rely on Alexander subbasis theorem.
\end{enumerate}  
With these notes, let us now prove (2).
Suppose that  $\{K_{ \lambda}\}_{\lambda \in \Lambda}$ is a family of subbasic closed sets of a  hyperspace $\mathcal{I}^+_R$   such that $\bigcap_{\lambda\in \Lambda}K_{ \lambda}=\emptyset.$ Let $\{I_{ \lambda}\}_{\lambda \in \Lambda}\in\mathcal{I}_R$ such  that $\forall \lambda \in \Lambda,$  $K_{ \lambda}=\mathcal{V}_{I_{ \lambda}}.$  Since \[\bigcap_{\lambda \in \Lambda}\mathcal{V}_{I_{ \lambda}}=\mathcal{V}_{\sum_{\lambda \in \Lambda}I_{ \lambda}},\] we get  $\mathcal{V}_{\sum_{\lambda \in \Lambda}I_{ \lambda}}=\emptyset.$ The existence of the sum follows from Lemma \ref{isi}. This implies that the hyperideal $ \sum_{\lambda \in \Lambda}I_{ \lambda}$ must be equal to $R$. Then, in particular, we obtain $1=n_{ \lambda_1}+\cdots + n_{ \lambda_n},$ where $n_{ \lambda_i}\in I_{\lambda_i}$, for $i=1, \ldots, n$. This implies    $R=\sum_{  i \, =1}^{ n}I_{\lambda_i}.$ Hence,   $\bigcap_{ i\,=1}^{ n}K_{ \lambda_i}=\emptyset,$ and $\mathcal{I}^+_R$ is quasi-compact by Alexander subbasis theorem.  
	
To obtain 	
(3), first we prove the existence of generic points of irreducible closed subsets of the hyperspace $\mathcal{I}^+_R$, and for that it is sufficient to show that $\mathcal{V}_I=\mathcal{C}_I$, whenever $I\in\mathcal{V}_I$. Since $\mathcal{C}_I$ is the smallest closed set containing $I$, and since $\mathcal{V}_I$ is a closed set containing $I$, obviously then  $\mathcal{C}_I\subseteq \mathcal{V}_I$. 
For the reverse inclusion, if $\mathcal{C}_I= \mathcal{I}^+_R$, then 
\[ 
\mathcal{I}^+_R=\mathcal{C}_I\subseteq \mathcal{V}_I\subseteq \mathcal{I}^+_R.
\] 
This proves that $\mathcal{V}_I=\mathcal{C}_I$. Suppose that $\mathcal{C}_I\neq \mathcal{I}^+_R$. Since $\mathcal{C}_I$ is a closed set,  there exists an  index set, $\Lambda$, such that,  for each $\lambda\in\Lambda$, there is a positive integer $n_{\lambda}$ and hyperideals $I_{\lambda 1},\dots, I_{\lambda n_\lambda}$ of $R$ such that 
\[
\mathcal{C}_I={\bigcap_{\lambda\in\Lambda}}\left({\bigcup_{ i\,=1}^{ n_\lambda}}\mathcal{V}_{I_{\lambda i}}\right).
\]
Since  
$\mathcal{C}_I\neq \mathcal{I}^+_R,$ we  assume that ${\bigcup_{ i\,=1}^{ n_\lambda}}\mathcal{V}_{I_{\lambda i}}$ is non-empty for each $\lambda$. Therefore, $I\in   {\bigcup_{ i\,=1}^{ n_\lambda}}\mathcal{V}_{I_{\lambda i}}$ for each $\lambda$, and hence \[\mathcal{V}_I\subseteq {\bigcup_{ i=1}^{ n_\lambda}}\mathcal{V}_{I_{\lambda i}},\] that is, $\mathcal{V}_I\subseteq \mathcal{C}_I$ as desired. 
To obtain the uniqueness of the generic point, it is sufficient to prove that $\mathcal{I}^+_R$ is a $T_0$-space. Let $I$ and $I'$ be two distinct elements of $\mathcal{I}^+_R$. Then, without loss of generality, we may assume that $I\nsubseteq I'$. Therefore $\mathcal{V}_I$ is a closed set containing $I$ and missing $I'$. 

Finally, to show
(4), notice that by considering  lower topology on $\mathcal{I}_R$, we immediately obtain $\{R\}=\mathcal{V}_R=\mathcal{C}_R,$ and hence $\mathcal{I}_R \backslash\mathcal{I}^+_R$ is closed. Hence $\mathcal{I}^+_R$ is an open hypersubspace of the hyperspace $\mathcal{I}_R$. \hfill \end{proof}


\begin{thebibliography}{10} 
	
\bibitem{CC11}
A. Connes and C. Consani, The hyperring of ad\'{e}le classes, \textit{J. Number Theory} \textbf{131}(2) (2011), 159--194.

\bibitem{CC10}
P. Corsini, From Krasner hyperrings to hyperstructures: in search of an absolute arithmetic, in:
Casimir Force, \textit{Casimir Operators and the Riemann Hypothesis}, de Gruyter, (2010), 147--198.

\bibitem{CL-F03}
\bysame and V. Leoreanu-Fotea, \textit{Applications of hyperstructure theory}, vol. 5, Springer, 2003.

\bibitem{DS06}
\bysame and A. Salasi, \textit{A realization of hyperrings}, Comm. Algebra \textbf{34}(12) (2006) 4389--4400.

\bibitem{DavLeo} \bysame and  V. Leoreanu-Fotea,  \textit{Hyperring theory and applications}, International Academic Press, 2007.

\bibitem{H69} 
M. Hochster, Prime ideal structure in commutative rings, \textit{Trans. Am. Math. Soc.}, \textbf{142} (1969), 43--60.

\bibitem{J18}
J. Jun, Algebraic geometry over hyperrings, \textit{Adv. Math.}, \textbf{323} (2018), 142--192.

\bibitem{K47} M. Krasner, A class of hyperrings and hyperfields, \textit{Internat. J. Math. Math. Sci.}, \textbf{6}(2) (1983), 307--311.  

\bibitem{KT22}
\"{O}. Kulak and B. N. T\"{u}rkmen, Zariski topology over multiplication Krasner hypermodules, \textit{Ukrainian Math. J.}, \textbf{74}(4) (2022), 597--607.

\bibitem{M34}
F. Marty, Sur une g\'{e}n\'{e}ralization de la notion de groupe, in: \textit{8th Congress Math. Scandinaves}, Stockholm, (1934), 45--49.

\bibitem{P94} H. A. Priestley, Intrinsic spectral topologies, in: \textit{Papers on general topology and applications}
(Flushing, NY, 1992), \textbf{728},  78--95, New York Acad. Sci., New York, 1994.
\end{thebibliography}
\end{document}